\newtheorem{thm}{Theorem}
[section]
\newtheorem{con}[thm]{Conjecture}
\newtheorem{prop}[thm]{Proposition}
\theoremstyle{definition}
\newtheorem{fct}[thm]{Fact}
\newtheorem{defn}[thm]{Definition}
\newtheorem{rmk}[thm]{Remark}
\numberwithin{equation}{subsection}
\newtheorem*{prop1}{Proposition}
\newcommand{\gen}[1]{\left\langle#1\right\rangle}
\begin{document}

\title{Geometric triviality of the Strongly Minimal\\second Painlev\'e equations.}
\author{Joel Nagloo$^1$ \\University of Leeds}
\date{\today}
\pagestyle{plain}
\subjclass[2010]{Primary 14H05, 34M55, 03C60; Secondary 14H70}
\begin{abstract}
We show that for $\alpha\not\in 1/2+\mathbb{Z}$, the second Painlev\'e equation $P_{II}(\alpha):\;y'' = 2y^3+ty+\alpha$ is geometrically trivial, that is we show that if $y_{1},...,y_{n}$ are distinct solutions such that $y_{1},y_{1}',y_{2},y_{2}',\ldots,y_{n},y_{n}'$ are algebraically dependent over $\mathbb{C}(t)$, then already for some $1\leq i < j \leq n$, $y_{i},y_{i}',y_{j},y_{j}'$ are algebraically dependent over $\mathbb{C}(t)$. This extend to the non generic parameters the results in \cite{NagPil} for $P_{II}(\alpha)$.
\end{abstract}
\maketitle 

\footnotetext[1]{Supported by an EPSRC Project Studentship and a University of Leeds - School of Mathematics partial scholarship}

%-----------------------------------------------------------------------------------------------------------------------------------------
\section{Introduction}

In this paper, we initiate the study of existence of algebraic relations over $\mathbb{C}(t)$ between solutions of the second Painlev\'e equation $P_{II}(\alpha):\;y'' = 2y^3+ty+\alpha$ for non generic $\alpha\in\mathbb{C}$. In the case of generic $\alpha\in\mathbb{C}$, the work in \cite{NagPil1} gives a complete answer:
\begin{prop1} Suppose $\alpha\not\in\mathbb{Q}^{alg}$. Then if $y_1,\ldots,y_n$ are solutions of $P_{II}(\alpha)$ then  $y_1,y'_1,\ldots,$ $y_n,y'_n$ are algebraically independent over $\mathbb{C}(t)$, that is $tr.deg\left(\mathbb{C}(t)(y_1,y'_1,\ldots,y_n,y'_n)/\mathbb{C}(t)\right)=2n$. 
\end{prop1}
One important step in the proof of that result was to first show that in the generic case, $P_{II}(\alpha)$ is geometrically trivial \cite{NagPil}. In this paper we have succeeded to prove that the same hold when $\alpha\not\in 1/2+\mathbb{Z}$. We say a few words about our strategy:\\

It is well known that for $\alpha\not\in 1/2+\mathbb{Z}$, $P_{II}(\alpha)$ is strongly minimal: if $y$ is a solution which satisfies an algebraic differential equation of the first order over a differential field extension $K$ of $\mathbb{C}(t)$, then $y$ is algebraic over $K$. On the other hand, from model theory, we have a very general classification of differential equations that are strongly minimal. For $P_{II}(\alpha)$, this classification says that either 1) it is geometrically trivial or 2) there is a differential rational relation between $P_{II}(\alpha)$ and a very special kind of ODE, $E^{\sharp}$, on an elliptic curve $E$ not defined over $\mathbb{C}$ (an example being the Picard-Painlev\'e VI).\\

Inspired by the work of Nishioka in \cite{Nishioka}, we show that for $\alpha\not\in 1/2+\mathbb{Z}$ any finite-to-finite correspondence $P_{II}(\alpha)\rightarrow P_{II}(\alpha)$ is an automorphism. From this we can conclude that $P_{II}(\alpha)$ is geometrically trivial. Indeed, on the $E^{\sharp}$'s are defined the multiplication-by-$n$ maps ($n\in\mathbb{N}$), and we show that these correspondences must be ``carried through" via any differential rational relation.\\

The paper is organized as follows. In Section 2 we set up the notation and recall some results we need from the study of strongly minimal sets in differentially closed fields of characteristic $0$. We end that section by proving that if $Y$ is a non-trivial locally modular strongly minimal set, then over some differential field $K$, there exits finite-to-finite correspondences $Y\rightarrow Y$ that are not automorphisms of $Y$ (Proposition \ref{prop1}). Using this, in Section 3 we show that for $\alpha\not\in 1/2+\mathbb{Z}$, $P_{II}(\alpha)$ cannot be non-trivial locally modular and so must be geometrically trivial (Proposition \ref{Mainprop}). Finally in Section 4 we give a restatement of the conjecture that in $DCF_0$ any geometrically trivial strongly minimal set is $\omega$-categorical.

%Finally in this introduction some acknowledgements: 
\subsection*{Acknowledgement}
The author would like to thank Prof. Anand Pillay for his support, guidance and continuous encouragement; but also for supplying the proof of Proposition \ref{prop1}. Thanks also to James Freitag for an interesting discussion around some of the problems considered in this paper.
%-----------------------------------------------------------------------------------------------------------------------------------------

\section{Preliminaries}

%We should make it clear that $\mathbb C$ can be seen in two ways, as a subfield of ${\mathcal U}$ (over which varieties, differential algebraic varieties,..  may be defined) and as a differential algebraic variety (or definable set) in its own right, defined by $y' = 0$.

We start with a brief summary of the notions from model theory and differential algebra that we will need (cf. \cite{Marker}). We fix the language $L_{\partial}$=($+,-,\cdot,0,1,\partial$) of differential fields. By $DCF_0$, we mean the theory of differentially closed fields of characteristic $0$ with a single derivation. It is well known that that $DCF_0$ is complete, has quantifier elimination and is $\omega$-stable. Although not explicit, these properties play an important role throughout this paper. For us, a differentially closed field is precisely a model of $DCF_0$. For a field $F$, $F^{alg}$ will denote its algebraic closure in the usual algebraic sense and by a definable set we mean a finite Boolean combinations of affine differential algebraic varieties (or Kolchin closed sets). If a definable set $Y$ in $K^n$ is defined with parameters from a differential subfield $F$ of $K$ we will say $Y$ is defined over $F$.\\

%A remark is that if $(F,\partial)$ is a {\em differential} field (of characteristic $0$) then $\partial$ extends uniquely to a derivation on $F^{alg}$  by $L_{\partial}$-formulas

We will also fix a ``saturated" model $\mathcal U = ({\mathcal U},+,-,\cdot,0,1,\partial)$ of $DCF_{0}$ of cardinality continuum and so we take $\mathbb{C}$, the field of complex numbers, as the field of constants of $\mathcal U$. Throughout $t$ will denote an element of $\mathcal{U}$ such that $\partial(t)=1$. If $K$ is a differential subfield of ${\mathcal U}$ and $y$ is a tuple from ${\mathcal U}$, then $K\langle y \rangle$ will denote the differential field $K(y,y',y'',...)$, where here $y'$ is short for $\partial(y)$. As usual, given a tuple $z$ from ${\mathcal U}$, $z\in K\langle y \rangle^{alg}$ means that the coordinates of $z$ are in $K\langle y \rangle^{alg}$.\\

Given a differential field $K$ and $\overline{a}$ a tuple of elements from $\mathcal{U}$, the type of $\overline{a}$ over $K$, denoted $tp(\overline{a}/K)$, is the set of all $L_{\partial}$-formulas with parameters from $K$ that $\overline{a}$ satisfies. It is not hard to see that the set $I_{p}=\{f\in K\{\overline{X}\}: f(\overline{X})=0\in p\}=\{f\in K\{\overline{X}\}: f(\overline{a})=0\}$ is a differential prime ideal in $K\{\overline{X}\}=K[\overline{X},\overline{X}',\ldots]$, where $p=tp(\overline{a}/K)$. Indeed, by quantifier elimination, the map $p\mapsto I_p$ is a bijection between the set of (complete) types over $K$ and differential prime ideals in $K\{\overline{X}\}$. Therefore in what follows there is no harm to think of $p=tp(\overline{a}/K)$ as the ideal $I_{p}$.\\

We will say that a definable set $Y\subseteq {\mathcal U}^{n}$ is finite dimensional if $order(Y)=sup\{$ $tr.deg(K\langle y \rangle/K):y\in Y\}$ is finite,  where $K$ is some differential field over which $Y$ is defined. For such a $Y$, we will call an element $y\in Y$ {\em generic over $K$} if $tr.deg(K\langle y\rangle/K) = order(Y)$.\\

\noindent An important class of finite dimensional sets is the following:

\begin{defn} A definable set $Y$ in ${\mathcal U}^{n}$ is {\em strongly minimal} if it is infinite but has no infinite co-infinite definable subsets. %Equivalently, if it cannot be written as the disjoint union of definable sets of order $m$ (where $m$ is the order of $X$), and for any differential field $K$ over which $Y$ is defined, and point $y\in Y$, either $y\in K^{alg}$ or $y$ is generic point of $Y$ over $K$ (i.e. $tr.deg(K\langle y \rangle/K)$ is $0$ or $m$).
\end{defn}

In \cite{NagPil} one can find a very detailed summary of the main results around the ``geometry" of strongly minimal sets. Here we recall the things we need and leave the details.

\begin{rmk}\label{sm-painleve}
If $Y\subseteq {\mathcal U}$ is defined by differential equation $y^{(n)} = f(y,y',..,y^{(n-1)},t)$ where $f$ is rational over $\mathbb C$, then $Y$ is strongly minimal if and only if for any differential subfield $F$ of $\mathcal U$ which is finitely generated over ${\mathbb C}(t)$, and $y\in Y$, either $y\in F^{alg}$ or $tr.deg(F\langle y \rangle/F) = n$. 
\end{rmk}

The first basic example of a strongly minimal set in $\mathcal{U}$ is the field of constants $\mathbb{C}=\{y\in\mathcal{U}:y'=0\}$. Other important examples come from simple abelian varieties $A$  over ${\mathcal U}$:
\begin{fct}[\cite{Buium},\cite{Hrushovski-Sokolovic}] Let $A$ be an abelian variety over ${\mathcal U}$. We identify $A$ with its set $A({\mathcal U})$ of ${\mathcal U}$-points. Then
\newline
(i) $A$ has a (unique) smallest Zariski-dense definable subgroup, which we denote by $A^{\sharp}$. 
\newline
%(ii) $A^{\sharp}$ is finite-dimensional, $dim(A)\leq order(A^{\sharp}) \leq 2dim(A)$ and moreover $dim(A) = order(A^{\sharp})$ if and only if $A$ descends to $C_{\mathcal{U}}$ (in which case $A^{\sharp} = A(C_{\mathcal{U}})$), 
%\newline
(ii) If $A$ is a simple abelian variety with $\mathbb{C}$-trace $0$, then $A^{\sharp}$ is strongly minimal. 
%\newline
%(iv) If $A$ is an elliptic curve then $A^{\sharp}$ is strongly minimal, whether or not $A$ descends to $C_{\mathcal{U}}$.
\end{fct}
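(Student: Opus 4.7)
The plan is to handle the two parts separately, in both cases exploiting $\omega$-stability of $DCF_0$ together with deep inputs from Buium and Hrushovski--Sokolovic.

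For part (i), the cleanest construction is via Buium's Manin map (Kolchin logarithmic derivative). For an abelian variety $A/\mathcal{U}$ of dimension $g$, one builds a surjective differential-algebraic group homomorphism $\mu_A: A \to L(A) \cong \mathcal{U}^g$, arising from the universal vectorial extension of $A$. Define $A^{\sharp}$ to be $\ker \mu_A$. This is a definable subgroup, and it is Zariski-dense in $A$ because it contains the full torsion subgroup of $A$, which is Zariski-dense. For minimality, I would use $\omega$-stability, which gives DCC on definable subgroups, hence a smallest Zariski-dense definable subgroup $H_0$; the universal property of $\mu_A$ among differential-algebraic homomorphisms from $A$ to commutative unipotent groups then forces $H_0 = A^{\sharp}$. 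Uniqueness is immediate from minimality.

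For part (ii), the first step is to show $A^{\sharp}$ has no proper infinite definable subgroups. If $H \subsetneq A^{\sharp}$ were definable and infinite, its Zariski closure $\overline{H}$ would be a positive-dimensional algebraic subgroup of $A$; by simplicity $\overline{H} = A$, making $H$ Zariski-dense in $A$. Then (i) forces $A^{\sharp} \subseteq H$, a contradiction. The second step, upgrading from ``no proper infinite subgroup'' to ``strongly minimal'', uses the Hrushovski--Sokolovic analysis: the $\mathbb{C}$-trace zero hypothesis implies that $A^{\sharp}$ is modular (\emph{1-based}) in the model-theoretic sense, and for a modular group every definable subset is, up to finite adjustment, a Boolean combination of cosets of definable subgroups. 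Combined with the absence of proper infinite definable subgroups, this forces every definable subset of $A^{\sharp}$ to be finite or cofinite.

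The main obstacle is the second step of part (ii). The absence of proper infinite definable subgroups is a clean consequence of simplicity together with (i), but strong minimality concerns arbitrary definable subsets, not just subgroups. Bridging this gap requires either the modularity of $A^{\sharp}$ (a deep result that genuinely depends on the $\mathbb{C}$-trace zero hypothesis) or an appeal to the Hrushovski--Zilber trichotomy in $DCF_0$. Without the trace zero hypothesis, $A$ would admit non-trivial definable maps into algebraic groups over $\mathbb{C}$, producing infinite co-infinite pullbacks of $\mathbb{C}$-definable subsets and hence obstructing strong minimality.
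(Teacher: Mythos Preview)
The paper does not prove this statement: it is presented as a \emph{Fact} with citations to Buium and Hrushovski--Sokolovi\'c, and no argument is given in the text. So there is no ``paper's own proof'' to compare against; your sketch is filling in what the paper deliberately blackboxes by citing the literature.

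That said, your outline is essentially the standard route through those references and is sound. One point worth flagging in part~(ii): the step ``no proper infinite definable subgroup $\Rightarrow$ strongly minimal'' genuinely cannot be done by group theory alone, and your example-in-the-negative is exactly right. Indeed, if $A$ is a simple abelian variety of dimension $g>1$ defined over $\mathbb{C}$, then $A^{\sharp}=A(\mathbb{C})$ has no proper infinite definable subgroup (the induced structure on $\mathbb{C}$ is pure $ACF$, so definable subgroups are algebraic), yet it has Morley rank $g>1$ and is not strongly minimal. So the $\mathbb{C}$-trace~$0$ hypothesis is doing real work, and the Hrushovski--Sokolovi\'c modularity input you invoke is the correct bridge. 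Just be aware that in some presentations the logical order is slightly different: one first shows $A^{\sharp}$ is semiminimal and orthogonal to $\mathbb{C}$ (this is where trace~$0$ enters), then applies the dichotomy for minimal types in $DCF_0$ to obtain $1$-basedness, and finally deduces strong minimality. Your sketch compresses this but does not misrepresent it.
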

The subgroup $A^{\sharp}$ is called the Manin kernel of $A$ (cf. \cite{Marker-Quaderni}) and it will play a very important role in what follows. 

\begin{defn} Let $Y\subset {\mathcal U}^{n}$ be strongly minimal, and suppose $order(Y) = m$.
\newline
(i) We say that $Y$ is {\em geometrically trivial} if for any countable differential field $K$ over which $Y$ is defined, and for any $y_{1},..,y_{\ell}\in Y$, if the collection consisting of $y_{1},..,y_{\ell}$ together with all their derivatives $y_{i}^{(j)}$ is algebraically dependent over $K$ then for some $i<j$, $y_{i}, y_{j}$ together with their derivatives is algebraically dependent over $K$.
\newline
(ii) Let $Z$ be another strongly minimal set and denote by $\pi_1:Y\times Z\rightarrow Y$ and $\pi_2:Y\times Z\rightarrow Z$ the projections to $Y$ and $Z$ respectively. We say that $Y$ and $Z$ are nonorthogonal if there is some infinite definable relation $R\subset Y\times Z$ such that ${\pi_1}_{\restriction R}$ and ${\pi_2}_{\restriction R}$ are finite-to-one functions.  
\end{defn}

\begin{rmk}\label{remorth} Suppose $Y$ and $Z$ are nonorthogonal strongly minimal sets and that the relation $R\subset Y\times Z$ is defined over some field $K$. Then by definition, for any generic $y\in Y$ there exist $z\in Z$ generic such that $(y,z)\in R$ and in that case $K\gen{y}^{alg}=K\gen{z}^{alg}$. So it is not hard to see that, if $Y,Z$ are nonorthogonal strongly minimal sets then $order(Y) = order(Z)$.
\end{rmk}

Nonorthogonality is an equivalence relation for strongly minimal sets and the following theorem, called the trichotomy, gives a classification of strongly minimal set in $\mathcal{U}$ up to nonorthogonality. We direct the reader to \cite{NagPil} for a summary of how the proof goes.
\begin{thm}[\cite{Hrushovski-Sokolovic}]\label{trichotomy} Let $X$ be a strongly minimal set. Then exactly one of the following holds:
\newline
(i) $X$ is nonorthogonal to the strongly minimal set $\mathbb{C}$ (defined by $y' = 0$),
\newline
(ii) $X$ is nonorthogonal to $A^{\sharp}$ for some simple abelian variety $A$ over ${\mathcal U}$ which does not descend to $\mathbb{C}$. 
\newline
(iii) $X$ is geometrically trivial. 
\end{thm}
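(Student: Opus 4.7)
The trichotomy theorem as stated is a specialization to $DCF_0$ of Zilber's trichotomy conjecture, proved for Zariski geometries by Hrushovski and Zilber, and refined in the differential setting by Hrushovski and Sokolovi\'c. My plan is to split the argument into a general stability-theoretic coordinatization step and a differential-algebraic identification step.

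The first step invokes the general Zilber trichotomy for (Zariski) strongly minimal sets to produce three mutually exclusive and exhaustive cases: $X$ is non-locally modular, $X$ is locally modular and non-trivial, or $X$ is trivial. In the non-locally modular case, Hrushovski's theorem interprets an algebraically closed field $F$ in the geometry of $X$. Since the only infinite fields interpretable in $DCF_0$ are, up to definable isomorphism, the constants $\mathbb{C}$ and the universe $\mathcal{U}$ itself, and the finite order of $X$ rules out the latter, $F$ must be $\mathbb{C}$; one then extracts from the interpretation a finite-to-finite correspondence that witnesses nonorthogonality of $X$ and $\mathbb{C}$, giving case (i).

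In the locally modular non-trivial case, Hrushovski's group configuration theorem produces a strongly minimal group $G$ nonorthogonal to $X$. The identification step then classifies such a $G$ inside $DCF_0$: by the Buium--Cassidy--Pillay analysis of finite-dimensional differential algebraic groups, $G$ is, up to isogeny, a subgroup of a commutative algebraic group. The additive and multiplicative options, as well as any group descending to $\mathbb{C}$, are ruled out because they are nonorthogonal to the constants, contradicting locally modular non-triviality (the constants being the prototypical non-locally modular case). What remains is that $G$ is a Zariski dense strongly minimal subgroup of a simple abelian variety $A$ not descending to $\mathbb{C}$, namely the Manin kernel $A^{\sharp}$ supplied by the earlier fact. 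This gives case (ii); case (iii) is the residual trivial case.

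The main obstacle is the identification of the locally modular non-trivial groups as Manin kernels. This requires both the internal differential-algebraic classification of commutative finite-dimensional differential groups and a careful analysis, via the Kolchin logarithmic derivative on $A$, of how the geometry of $A^{\sharp}$ encodes one-basedness; together with the observation that non-descent of $A$ to $\mathbb{C}$ is precisely what prevents $A^{\sharp}$ from being nonorthogonal to the constants, thereby keeping cases (i) and (ii) genuinely distinct.
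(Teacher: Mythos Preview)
The paper does not supply its own proof of this theorem: it is quoted as a result of Hrushovski--Sokolovi\'c, and the reader is referred to \cite{NagPil} for a summary of the argument. So there is no in-paper proof to compare against; your outline is being measured against the standard literature proof rather than anything original to this article.

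That said, your sketch tracks the accepted route quite faithfully: split by the pregeometry into trivial, non-trivial locally modular, and non-locally modular; in the last case interpret a field and identify it with the constants; in the middle case run the group configuration to get a strongly minimal group and then classify it differential-algebraically as a Manin kernel $A^{\sharp}$ of a simple abelian variety with $\mathbb{C}$-trace $0$. Two places where you are glossing: (a) invoking the Hrushovski--Zilber theorem requires knowing that strongly minimal sets in $DCF_0$ are Zariski geometries, which is itself a substantial (and the genuinely hard) component of the Hrushovski--Sokolovi\'c work and deserves to be named explicitly; and (b) the elimination of the linear pieces in the commutative differential algebraic group is a bit more than ``additive and multiplicative options'' --- one argues that any vector group or torus quotient forces nonorthogonality to $\mathbb{C}$, and that if $A$ descends to $\mathbb{C}$ then $A^{\sharp}=A(\mathbb{C})$, again landing in case (i). With those two points made explicit your outline would match the summary the paper is pointing to.
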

Sets in (ii) are called {\em non-trivial locally modular} strongly minimal sets. It is worth mentioning that without any doubt, the trichotomy (as stated) is one of the deepest result in the model theory of differentially closed field of characteristic $0$. In any case, in this paper we will use it to make a crucial observation about the non-trivial locally modular strongly minimal sets.\\

Let $Y$ be a strongly minimal set in $\mathcal{U}$ and suppose $K$ is any differential field over which $Y$ is defined. If $y$ and $z$ in $Y$ are such that $y\in K\gen{z}^{alg}$ (and so $z\in K\gen{y}^{alg}$), then $mult(y/K\gen{z})$ will denote the (finite) number of elements of $Y$ that realizes $tp(y/K\gen{z})$. In other words, $mult(y/K\gen{z})$ is the degree of the minimal polynomial of $y$ over $K\gen{z}$.\\

It is well known that finite-to-finite correspondences (indeed finite-to-one) exist for the Manin kernels: Let $A$ be a simple abelian variety with $\mathbb{C}$-trace $0$ and let $Y=A^{\sharp}$. For each $n\in\mathbb{N}$, we have multiplication-by-$n$ map, $n:A\rightarrow A$. This map is surjective and $n^{2d}$-to-$1$ (where $d$ is the dimension of $A$). So for any generic point $a\in Y$, $mult(a/n\cdot a))=1$ while $mult(n\cdot a/a)>1$ (as long as $n>1$).\\

Our first result says that the same is true for any non-trivial locally modular strongly minimal sets:
\begin{prop}\label{prop1}
Suppose that $Y$ is a non-trivial locally modular strongly minimal set in $\mathcal{U}$. Then there exists a differential field $K$ such that for any generic point $y$ of $Y$ over $K$, there exist $z\in Y$ generic over $K$, such that $z\in K\gen{y}^{alg}$ and $z\not\in K\gen{y}$.
\end{prop}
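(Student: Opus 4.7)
The plan is to invoke the trichotomy theorem (Theorem~\ref{trichotomy}) to reduce to the case of Manin kernels of simple abelian varieties that do not descend to $\mathbb{C}$, where the multiplication-by-$n$ maps provide natural multi-valued correspondences, and then to transfer the conclusion back to $Y$ via the nonorthogonality relation. Since $Y$ is non-trivial locally modular, $Y$ is nonorthogonal to $A^{\sharp}$ for some simple abelian variety $A$ over $\mathcal{U}$ not descending to $\mathbb{C}$. Let $d=\dim A$ and fix a definable infinite $R\subseteq Y\times A^{\sharp}$ witnessing the nonorthogonality, with both projections finite-to-one; let $F$ be a countable differential field containing all parameters for $Y$, $A$, $A^{\sharp}$, and $R$.

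On the $A^{\sharp}$-side, for each $n\geq 2$ the map $n\colon A^{\sharp}\to A^{\sharp}$ is surjective and $n^{2d}$-to-one with kernel $A[n]\subset A^{\sharp}$. Since $A$ has infinitely many torsion points while only finitely many lie in any given $A[n]$, and $F$ can be chosen minimally, we may choose $n\geq 2$ such that $A[n]\not\subseteq F$. Set $K=F$. For $y\in Y$ generic over $K$, pick $a\in A^{\sharp}$ with $(y,a)\in R$; by nonorthogonality, $a$ is generic over $K$ and $K\gen{y}^{alg}=K\gen{a}^{alg}$. A standard genericity argument (using that $a$ is still generic over every finite algebraic extension of $K$, and hence satisfies no new algebraic relations there) shows that the relative algebraic closure of $K$ in $K\gen{a}$ is $K$ itself, so in particular $A[n]\not\subseteq K\gen{a}$. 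Hence for any preimage $a'$ of $a$ under multiplication by $n$, the coset $a'+A[n]$ of all preimages is not contained in $K\gen{a}$, and we may choose $a'$ with $a'\notin K\gen{a}$. Letting $z\in Y$ with $(z,a')\in R$, we obtain $z$ generic over $K$ (since multiplication by $n$ is dominant on $A^{\sharp}$) with $z\in K\gen{a'}^{alg}\subseteq K\gen{a}^{alg}=K\gen{y}^{alg}$.

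The main obstacle is to verify that $z\notin K\gen{y}$. The nontrivial Galois action of a subgroup of $A[n]$ on the preimage orbit of $a$ over $K\gen{a}$ must be shown to transfer, via $R$, to a nontrivial Galois action on the corresponding elements of $Y$ over $K\gen{y}$. This requires a careful degree chase in the tower $K\gen{y}\subseteq K\gen{y,a}\subseteq K\gen{y,a,a'}$, comparing the multiplicities $[K\gen{y,a}:K\gen{y}]$ and $[K\gen{y,a}:K\gen{a}]$ (both finite, arising from $R$) with $[K\gen{a,a'}:K\gen{a}]>1$. The danger is that enlarging the base from $K\gen{a}$ to $K\gen{y}$ through the compositum $K\gen{y,a}$ could a priori collapse the non-triviality coming from $A[n]$. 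To deal with this, one first arranges, by replacing $R$ with a suitable irreducible component and possibly enlarging $F$ by finitely many algebraic parameters, that $R$ induces a generic bijection between $Y$ and $A^{\sharp}$; then $K\gen{y}=K\gen{a}$ and the required $mult(z/K\gen{y})>1$ follows directly from $mult(a'/K\gen{a})>1$.
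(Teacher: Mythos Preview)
Your direct approach runs into a genuine gap at the final step. The claim that ``by replacing $R$ with a suitable irreducible component and possibly enlarging $F$ by finitely many algebraic parameters, $R$ induces a generic bijection between $Y$ and $A^{\sharp}$'' is not justified, and in fact fails in general: nonorthogonality of strongly minimal sets yields only a finite-to-finite correspondence, and passing to an irreducible component over a larger base does not force either projection to become generically one-to-one. If it did, the proposition would be immediate, and every strongly minimal set nonorthogonal to $A^{\sharp}$ would acquire a definable group structure pulled back from $A^{\sharp}$ --- which is certainly not the case (consider, for instance, the quotient of $A^{\sharp}$ by a nontrivial finite group of definable automorphisms). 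Without the bijection, you have $a'\notin K\gen{a}$ but no control over $z$ relative to $K\gen{y}$: the fields $K\gen{y}$ and $K\gen{a}$ share an algebraic closure but need not coincide, and the extra degree coming from $A[n]$ can in principle be absorbed by the degrees of the two projections of $R$. Your own ``degree chase'' paragraph correctly identifies this as the obstacle, but the proposed resolution does not work.

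The paper's argument avoids this entirely by proceeding by contradiction. Assuming the conclusion fails, one gets that any two interalgebraic generics $a,b\in Y$ over $K$ satisfy $K\gen{a}=K\gen{b}$. This rigidity on the $Y$-side is then exploited, through the correspondence with $A^{\sharp}$, to show that for \emph{any} interalgebraic generics $d,e\in A^{\sharp}$ one has $mult(d/e)=mult(e/d)$: one fixes $a\in Y$ interalgebraic with both and runs a short chain of multiplicity identities (the key input being that the hypothesis forces $K\gen{a}=K\gen{b}$ whenever $b\in Y$ is interalgebraic with $a$, which lets one transport finite sets of realizations between base points). This symmetry is then contradicted by $d=n\cdot e$. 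In other words, the contradiction hypothesis hands you for free precisely the field equality on the $Y$-side that you were trying to manufacture; the actual work lies in the multiplicity bookkeeping, not in improving $R$.
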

\begin{proof}
Assume for contradiction that the conclusion of the proposition does not hold.\\
Now as $Y$ is non-trivial locally modular, from Theorem \ref{trichotomy}, there exist a simple Abelian variety $A$ not defined over $\mathbb{C}$ such that $Y$ is nonorthogonal to $X=A^{\sharp}$. Let $K$ be the differential field over which this occur and throughout will be working over $K$. Also, in what follows $Y_{gen}$ will be the set of generic points of $Y$ (same for $X_{gen}$).\\
We show that for $d,e\in X_{gen}$, if $d$ and $e$ are interalgebraic, then $mult(e/d)=mult(d/e)$. This contradicts the case when $d=n\cdot e$ (for any $n\in\mathbb{N}_{>1}$) as discussed above and we are done. On the other hand as $Y$ and $X$ are nonorthogonal, if we let $y\in Y_{gen}$ and let $Z=acl(y)\cap(Y_{gen}\cup X_{gen})$, then it is enough to show that if $d,e\in Z\cap X$, then $mult(e/d)=mult(d/e)$. We prove this using several claims.\\

Let $a\in Z\cap Y$ and $d,e\in Z\cap X$.\\
{\bf Claim 1:}  $mult(e/a)=mult(d/a)$ and $mult(a/e)=mult(a/d)$. Indeed $mult(d/a)$ and $mult(a/d)$ does not depend on the choice of $a$ or $d$.\\
{\em Proof:} First note that by construction (and nonorthogonality) $a$,$d$ and $e$ are all interalgebraic. Indeed any two elements of $Z$ are.\\
We first prove that $mult(e/a)=mult(d/a)$. So suppose $D=\{d=d_1,\ldots,d_k\}$ is the set of realizations of $tp(d/a)$, i.e $mult(d/a)=k$. Now as $tp(e)=tp(d)$, it is not hard to see that there is $b\in Y_{gen}$ and $E=\{e=e_1,\ldots,e_k\}\subseteq X_{gen}$ such that $E$ is the set of realizations of $tp(e/b)$. But then $K\gen{a}^{alg}=K\gen{b}^{alg}$ and hence by assumption $K\gen{a}=K\gen{b}$. So for any $\sigma\in Aut(\mathcal{U}/a)$, $\sigma(b)=b$ and hence $\sigma(E)=E$. In particular $E$ is definable over $a$ and we have that $E$ is the set of realizations of $tp(e/a)$, i.e $mult(e/a)=k$.\\
We now prove that $mult(a/e)=mult(a/d)$. Suppose $mult(a/d)=l$ and that $\phi(x,d)$ isolates $tp(a/d)$, i.e $\models\exists^{=l}y\phi(y,d)$. As $tp(e)=tp(d)$, we have that $\models\exists^{=l}y\phi(y,e)$. Choose $c\in Y_{gen}$ such that $\models\phi(c,e)$. Then $K\gen{a}^{alg}=K\gen{c}^{alg}$ and hence by assumption $K\gen{a}=K\gen{c}$, that is there is a $L_{\partial}$-formula $\theta(x,y)$ such that $\models\theta(a,c)\wedge\exists^{=1}x\theta(x,c)\wedge\exists^{=1}y\theta(a,y)$. Let $\psi(x,e)$ be the $L_{\partial}$-formula $\exists y\phi(y,e)\wedge\theta(x,y)$. By construction $\models\exists^{=l}x\psi(x,e)$ and it follows that $\psi(x,e)$ isolates $tp(a/e)$, i.e $mult(a/e)=l$.\\

\noindent{\bf Claim 2:} $mult(d/a,e)=mult(e/a,d)$.\\
{\em Proof:} Since 
\begin{eqnarray*}
mult(de/a)&=&mult(e/a)\cdot mult(d/a,e)\\
 &=& mult(d/a)\cdot mult(e/a,d),
\end{eqnarray*}
using Claim 1 we are done.\\

\noindent{\bf Claim 3:} $mult(da/e)=mult(ea/d)$.\\
{\em Proof:} As before, since
\begin{eqnarray*}
mult(da/e)&=&mult(a/e)\cdot mult(d/e,a)\\
mult(ea/d)&=&mult(a/d)\cdot mult(e/a,d),
\end{eqnarray*}
using both Claim 1 and Claim 2 we are done.\\

\noindent Finally\\
{\bf Claim 4:} $mult(d/e)=mult(e/d)$.\\
{\em Proof:} This time we use
\begin{eqnarray*}
mult(da/e)&=&mult(d/e)\cdot mult(a/d,e)\\
mult(ea/d)&=&mult(e/d)\cdot mult(a/d,e).
\end{eqnarray*}
So from Claim 3 the result follows
\end{proof}

So if $Y$ is a non-trivial locally modular strongly minimal set, over some differential field $K$, there exits finite-to-finite correspondences $Y\rightarrow Y$ that are not automorphisms of $Y$. The aim of the next section will hence be to show that such is not the case for the the strongly minimal second Painlev\'e equations.

\section{Geometric triviality and the second Painlev\'e equations} 
In this section we look at the second Painlev\'e equation, $y''=2y^3+ty+\alpha$, $\alpha\in\mathbb{C}$ and denote by $Y(\alpha)\subseteq\mathcal{U}$ its solution set. From \cite{NagPil} we have the following:
\begin{prop}\label{prop2}{\color{white} hs}\\
(i) $Y(\alpha)$ is strongly minimal if and only if $\alpha\not\in\frac{1}{2}+\mathbb{Z}$.\\
(ii) For $\alpha\not\in\mathbb{Q}^{alg}$, $Y(\alpha)$ is geometrically trivial.
\end{prop}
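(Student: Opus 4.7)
The plan is to handle the two parts separately, drawing on classical irreducibility results for $P_{II}$ and on the Hrushovski--Sokolovic trichotomy.

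For part (i), the ``only if'' direction I would witness by the classical Riccati solutions. When $\alpha = -1/2$, any solution of the first-order Riccati equation $y' + y^2 + t/2 = 0$ is easily checked to satisfy $P_{II}(-1/2)$; this produces an infinite one-parameter family properly contained in the two-parameter set $Y(-1/2)$, violating strong minimality. Transporting this obstruction by the standard B\"acklund transformation $\alpha \mapsto \alpha+1$ covers the remaining half-integer parameters. For the ``if'' direction, I would invoke the Umemura--Watanabe irreducibility theorem, which asserts that for $\alpha \not\in 1/2+\mathbb{Z}$ no solution of $P_{II}(\alpha)$ satisfies a first-order algebraic differential equation over a finitely generated extension of $\mathbb{C}(t)$ unless it is already algebraic over that extension. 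Combined with Remark \ref{sm-painleve} this is equivalent to strong minimality.

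For part (ii), apply the trichotomy (Theorem \ref{trichotomy}) to $Y(\alpha)$. Since $Y(\alpha)$ has order $2$ while $\mathbb{C}$ has order $1$, Remark \ref{remorth} rules out nonorthogonality to $\mathbb{C}$. Hence $Y(\alpha)$ is either geometrically trivial or nonorthogonal to some $A^{\sharp}$ for a simple abelian surface $A$ with $\mathbb{C}$-trace $0$. I would rule out the latter via a transcendence-of-parameter argument: any nonorthogonality relation $R \subset Y(\alpha) \times A^{\sharp}$ is defined over some differential field $K$ finitely generated over $\mathbb{C}(t)$, so in particular $A$ is defined over a field involving $\alpha$ only through finitely many algebraic constraints. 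Pulling back along an automorphism of $\mathbb{C}$ sending $\alpha$ to a second transcendental $\beta$ produces an analogous relation between $Y(\beta)$ and a conjugate abelian surface, and the rigidity of simple abelian surfaces not descending to $\mathbb{C}$ under such deformations forces $\alpha$ to satisfy an algebraic relation, contradicting $\alpha \not\in \mathbb{Q}^{alg}$.

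An alternative strategy, closer to the spirit of the rest of the paper, would be to apply Proposition \ref{prop1}: if $Y(\alpha)$ were nonorthogonal to some $A^{\sharp}$, then over some $K$ there must exist a finite-to-finite correspondence $Y(\alpha) \to Y(\alpha)$ that is not an automorphism. One would then compute the self-correspondences of $Y(\alpha)$ directly, using B\"acklund theory, and show that for $\alpha$ transcendental over $\mathbb{Q}$ every such correspondence is in fact an automorphism. The main obstacle in either route is precisely this control of the rational correspondences $Y(\alpha) \to Y(\alpha)$: the transcendence of $\alpha$ is the crucial input, as it prevents the arithmetic identifications that would be needed to realize a multiplication-by-$n$ map pulled back from a Manin kernel.
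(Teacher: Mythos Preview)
The paper does not prove this proposition at all; it is quoted from \cite{NagPil} as background. So there is no in-paper argument to compare against, only the question of whether your reconstruction stands on its own.

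Your outline for (i) is the standard one and is correct: Riccati solutions at $\alpha=-1/2$, B\"acklund shifts to reach all of $1/2+\mathbb{Z}$, and Umemura--Watanabe irreducibility for the converse via Remark~\ref{sm-painleve}.

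For (ii), your first route has a real gap. Conjugating by an automorphism of $\mathbb{C}$ sending $\alpha$ to another transcendental $\beta$ only tells you that $Y(\beta)$ is nonorthogonal to the \emph{conjugate} Manin kernel. Nothing you have called ``rigidity'' forbids a genuine family of simple abelian surfaces $A_\alpha$, varying with $\alpha$, each with $Y(\alpha)$ nonorthogonal to $A_\alpha^\sharp$; no algebraic relation on $\alpha$ drops out. To make a transcendence-of-parameter argument work one needs something sharper---e.g.\ that nonorthogonality to some Manin kernel is a definable condition on $\alpha$, hence holds on a cofinite set once it holds for one transcendental value, after which one must still rule it out at a specific algebraic $\alpha$. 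That last step is itself nontrivial.

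Your second route is sound, but observe that it is exactly Proposition~\ref{Mainprop} of the present paper, whose Puiseux-series computation shows every finite-to-finite self-correspondence of $Y(\alpha)$ is a birational automorphism, and does so for \emph{all} $\alpha\notin 1/2+\mathbb{Z}$ with no use of transcendence. So this approach makes the hypothesis $\alpha\notin\mathbb{Q}^{alg}$ redundant and collapses (ii) into the paper's main theorem. (Also: the B\"acklund maps send $Y(\alpha)$ to $Y(\alpha\pm 1)$ or $Y(-\alpha)$, not to itself, so they are not directly a tool for self-correspondences.)
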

We now aim to extend Proposition \ref{prop2}(ii) to all $\alpha\not\in\frac{1}{2}+\mathbb{Z}$.
\begin{rmk}\label{Rem}
Strong minimality of $Y(\alpha)$ is also equivalent to the following statement (cf. \cite{NagPil}): Let $y\in Y(\alpha)$ with $y\not\in K^{alg}$ ($K$ some differential field over which $Y(\alpha)$ is defined) and consider the polynomial algebra $K[y,y']$. If a nonzero polynomial $f\in K[y,y']$ divides it derivative $f'$, then $f\in K$.\\
This is what is often called ``Condition J" of Umemura.
\end{rmk}
Before we proceed recall that for a field $K$,  $K\left(\left(X\right)\right)$ denotes the field of formal Laurent series in variable $X$, while $K\gen{\gen{X}}$ denotes the field of formal Puiseux series, i.e. the field $\bigcup_{d\in{\mathbb{N}}}K\left(\left(X^{1/d}\right)\right)$. It is well know that if $K$ is algebraically closed then so is $K\gen{\gen{X}}$ (cf. \cite{Eisenbud}).\\%In Eisenbud corollary 13.15 (p. 295)

\begin{prop}\label{Mainprop}
For $\alpha\not\in\frac{1}{2}+\mathbb{Z}$, $Y(\alpha)$ is geometrically trivial.
\end{prop}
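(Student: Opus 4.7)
The plan is to argue by contradiction. Assume $Y(\alpha)$ is not geometrically trivial. By Proposition \ref{prop2}(i), $Y(\alpha)$ is strongly minimal, so Theorem \ref{trichotomy} forces it to be nonorthogonal either to $\mathbb{C}$ or to $A^{\sharp}$ for some simple abelian variety $A$ not descending to $\mathbb{C}$. The first possibility is excluded by Remark \ref{remorth}, since $\text{order}(Y(\alpha))=2$ while $\text{order}(\mathbb{C})=1$. So $Y(\alpha)$ must be non-trivial locally modular, and Proposition \ref{prop1} yields a differential field $K\supseteq\mathbb{C}(t)$ and generic $y,z\in Y(\alpha)$ over $K$ with $z\in K\langle y\rangle^{alg}\setminus K\langle y\rangle$. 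The task is to rule out such a pair, equivalently to show that every finite-to-finite correspondence $Y(\alpha)\to Y(\alpha)$ is an automorphism.

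Since $y$ is generic in a strongly minimal set of order $2$, $K\langle y\rangle=K(y,y')$ with $y,y'$ algebraically independent over $K$, and $z$ is algebraic over $K(y,y')$ of some degree $d\geq 2$. Fix an irreducible $F(Y,Y_{1},Z)\in K[Y,Y_{1},Z]$ of degree $d$ in $Z$ with $F(y,y',z)=0$, and let $z=z_{1},\ldots,z_{d}$ denote the roots of $F(y,y',Z)$ in $K(y,y')^{alg}$. The derivation on $K(y,y')$ extends uniquely to $K(y,y')^{alg}$, and this extension commutes with $\text{Gal}(K(y,y')^{alg}/K(y,y'))$; applying any such automorphism to the identity $z''=2z^{3}+tz+\alpha$ yields $z_{i}''=2z_{i}^{3}+tz_{i}+\alpha$ for every $i$. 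Hence every $z_{i}$ lies in $Y(\alpha)$ and is interalgebraic with $y$ over $K$.

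The technical heart, in the spirit of Nishioka \cite{Nishioka}, is a Puiseux series analysis. Since $K(y)^{alg}$ is algebraically closed, the Puiseux field $K(y)^{alg}\langle\langle(y')^{-1}\rangle\rangle$ is algebraically closed (as recalled just before the statement) and contains each $z_{i}$, so every $z_{i}$ admits a Puiseux expansion with coefficients in $K(y)^{alg}$. Imposing compatibility of the derivation $D$ determined by $D_{\restriction K}=\partial$, $D(y)=y'$, $D(y')=2y^{3}+ty+\alpha$ with the identities $D^{2}(z_{i})=2z_{i}^{3}+tz_{i}+\alpha$ pins down these expansions rigidly and constrains how Galois permutes the branches. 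Passing to the elementary symmetric functions of the $z_{i}$ --- i.e.\ the coefficients of $F(y,y',Z)$ in $K[y,y']$ after clearing denominators --- these constraints should produce a nonzero polynomial $f\in K[y,y']$ satisfying $f\mid f'$. Umemura's Condition J (Remark \ref{Rem}) then forces $f\in K$, which collapses $F$ to degree $1$ in $Z$, contradicting $d\geq 2$.

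The main obstacle is this last paragraph: choosing the right place at which to expand and controlling the branches of the various $z_{i}$ precisely enough to extract a divisibility statement of the form required by Condition J. Once the contradiction is established, no pair $y,z$ as above can exist, so Proposition \ref{prop1} rules out non-trivial local modularity; combined with the exclusion of nonorthogonality to $\mathbb{C}$, this forces $Y(\alpha)$ to be geometrically trivial.
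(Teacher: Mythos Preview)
Your overall architecture matches the paper: reduce via Proposition~\ref{prop1} to showing that interalgebraic generics $y,z\in Y(\alpha)$ over any $K\supseteq\mathbb{C}(t)$ satisfy $K(y,y')=K(z,z')$, and attack this with a Puiseux analysis in the style of Nishioka. Where you diverge is in the execution of that analysis, and your sketch in the third paragraph does not line up with what actually works.

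The paper does \emph{not} expand at infinity in $y'$, nor does it pass to symmetric functions of the conjugates $z_i$ to manufacture a global divisibility $f\mid f'$. Instead it fixes $K_1=K(y')^{alg}$, views $z$ as algebraic over $K_1(y)$, and expands at an \emph{arbitrary finite point} $\beta\in K_1$ via $y=\beta+\tau^e$, $z=\sum_{i\ge r}a_i\tau^i$. Condition~J is invoked only once, and for a different purpose than you envisage: it shows that a certain leading coefficient $\gamma=y'-\beta^*-\beta_{y'}(2\beta^3+t\beta+\alpha)$ is nonzero (otherwise the minimal polynomial of $\beta$ over $K[y,y']$ would divide its own derivative). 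With $\gamma\neq 0$ in hand, a direct coefficient comparison in the identity $z''=2z^3+tz+\alpha$ forces first $r<0$ when $e>1$, and then $r=-e$, $a_r^2=\gamma^2$, and finally a numerical contradiction ($l\in\{-2e,3e\}$ while $e\nmid l$). Hence $e=1$ at every $\beta$, so $z\in K_1(y)$; a symmetric argument gives $z\in K_0(y')$ with $K_0=K(y)^{alg}$, and intersecting gives $z\in K(y,y')$.

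So the gap in your plan is precisely the paragraph you flag as the obstacle: the route through symmetric functions and a global $f\mid f'$ is not how the argument goes, and it is unclear it can be made to work. What you need instead is the local unramifiedness statement above, proved by the explicit coefficient chase, with Condition~J used only to rule out the degenerate case $\gamma=0$.
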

\begin{proof}
We fix $\alpha\not\in\frac{1}{2}+\mathbb{Z}$. First note that by Remark \ref{remorth}, $Y(\alpha)$ is orthogonal to $\mathbb{C}$. So from Proposition \ref{prop1} we only have to prove that if $y$ and $z$ are two generic elements of $Y(\alpha)$, then for any differential field $K$ over which $Y(\alpha)$ is defined, if $K(y,y')^{alg}=K(z,z')^{alg}$, then $K(y,y')=K(z,z')$.\\

So let $K$ be any differential field containing $\mathbb{C}(t)$ and let $y,z\in Y(\alpha)$ (generic) be such that $z\in K(y,y')^{alg}$. Letting  $K_1$ denote the algebraic closure of $K(y')$ in $\mathcal{U}$, we regard $z$ as algebraic over $K_1(y)$. Then for any  $\beta\in K_1$, $z$ can be seen as an element of $K_1\gen{\gen{y-\beta}}$, so that there exists $e\in\mathbb{N}$ such that $z\in K_1\left(\left((y-\beta)^{1/e}\right)\right)$. A simpler way of saying the above ($\grave{a}${\em -la-Nishioka}) is that we look at expansions in a local parameter $\tau$ at $\beta\in K_1$ given by
\[y=\beta+\tau^e\;\;\;\;\;\;\;\;\;\;\;\;\;z=\sum_{i=r}^{\infty}a_i\tau^i\;\;\;\;(a_r\neq0)\]
with $e$ the ramification exponent.
\par Differentiating we have
\begin{eqnarray*}
e\tau^{e-1}\tau' &=& y'-\beta'\\
	&=& y'-\beta^*-\beta_{y'}(2y^3+ty+\alpha)\\
	%&=& y'-y^2-\frac{t}{2}-\beta^*-\beta_y(2xy+\alpha+1/2)\\
	%&=& y-(\beta+\tau^e)^2-\frac{t}{2}-\beta^*-\beta_y(2(\beta+\tau^e)y+\alpha+1/2)\\
	&=& y'-\beta^*-\beta_{y'}(2\beta^3+t\beta+\alpha)+(6\beta^2+t)\beta_{y'}\tau^e+6\beta\beta_{y'}\tau^{2e}+\beta_{y'}\tau^{3e}
\end{eqnarray*}
where ``$^*$'' indicates the extension of the derivation on $K[y']$ given by $(\sum c_i{y'}^i)^*=\sum c'_i{y'}^i$ and $\beta_{y'}=\frac{\partial\beta}{\partial y'}$.
\par Letting $\gamma=y'-\beta^*-\beta_{y'}(2\beta^3+t\beta+\alpha)$, we have\\
{\bf Claim 1:} $\gamma\neq0$\\
{\em Proof:} For contradiction, suppose that $\gamma=0$, that is
\begin{equation}\label{gamma}
y'-\beta^*=\beta_{y'}(2\beta^3+t\beta+\alpha)
\end{equation}
Let $F\in K[y,y']$ be an irreducible polynomial such that $F(\beta,y')=0$. Then
\begin{eqnarray}
F^*(\beta,y')+\beta^*F_y(\beta,y')&=&0\\ \label{eqn1}
F_{y'}(\beta,y')+\beta_{y'}F_y(\beta,y')&=&0. \label{eqn2}
\end{eqnarray}
If we multiply \ref{eqn2} by $2\beta^3+t\beta+\alpha$ and use \ref{gamma}, we have
\[(2\beta^3+t\beta+\alpha)F_{y'}(\beta,y')+(y-\beta^*)F_y(\beta,y')=0.\] So, together with \ref{eqn1} we get
\[F^*(\beta,y')+y'F_y(\beta,y')+(2\beta^3+t\beta+\alpha)F_{y'}(\beta,y')=0.\]
In other words $F'(\beta,y)=0$, so that $F$ divides its derivative $F'$. This contradicts strong minimality of $Y(\alpha)$ as per Remark \ref{Rem} and the claim is proved.\\
\par Hence $\gamma\neq0$ implies that $\tau'=e^{-1}\gamma\tau^{1-e}+\cdots$ and from this we get that
\begin{eqnarray*}
(\tau^i)'&=& i\tau^{i-1}\tau'\\
	&=& i\tau^{i-1}e^{-1}\gamma\tau^{1-e}+\cdots\\
	&=& \frac{i\gamma}{e}\tau^{i-e}+\cdots
\end{eqnarray*}
and similarly
\[(\tau^i)''=\frac{i(i-e)}{e^2}\gamma^2\tau^{i-2e}+\cdots.\]
Now from
\[z''=2z^3+tz+\alpha\;\;\;\;\;\;\;\;\text{and}\;\;\;\;\;z=\sum_{i=r}^{\infty}a_i\tau^i\]
we have 
\begin{equation}\label{main}
\sum_{i=r}^{\infty}a_i\frac{i(i-e)}{e^2}\gamma^2\tau^{i-2e}+\cdots=2\sum_{i,j,k}a_ia_ja_k\tau^{i+j+k}+\cdots.
\end{equation}
Using this we prove a couple of claims.\\
{\bf Claim 2:} If $e>1$ then $r<0$.\\
{\em Proof:} Let $e>1$ and assume $r\geq0$. Choose $l\in\{r,r+1,\ldots\}$ least such that $e\nmid l$ and $a_l\neq0$. First, one should note that since $l-2e<l$ and $e\nmid l-2e$, $a_{l-2e}=0$ (same for $a_{l-e}=0$), so that in what follows one does not need to worry about the other coefficients in \ref{main}.
\par So if we look at the coefficient of $\tau^{l-2e}$ on the LHS of \ref{main}, we see that
\[a_l\frac{l(l-e)}{e^2}\gamma^2\neq0.\]
This implies that the coefficient on the RHS of $\tau^{i+j+k}$ for some $i,j,k\geq r$ with $i+j+k=l-2e$ must be non-zero. However for any such, since $i+j+k<l$ and $e\nmid i+j+k$, we have that $e$ does not divide at least one them, say $i<l$. But then $a_i=0$ (as $l$ was chosen to be the least with this property) and so $a_ia_ja_k=0$, a contradiction.\\
%, that is the coefficient of $\tau^{l-2e}=\tau^{i+j+k}$ is zero on the RHS. C
{\bf Claim 3:} The case $e>1$ and $r<0$ leads to a contradiction.\\
{\em Proof:} So this time suppose $e>1$ and  $r<0$. From the least powers of $\tau$ in $\ref{main}$ we have $r-2e=3r$, that is $r=-e$, and from the coefficients of $\tau^{r-2e}$ we get
\[a_r\frac{r(r-e)}{e^2}\gamma^2=2a_r^3\]
so that $a_r=\pm\gamma$, since $a_r\neq0$.\\
So again choose $l\in\{r,r+1,\ldots\}$ least such that $e\nmid l$ and $a_l\neq0$. The coefficient of $\tau^{l-2e}=\tau^{l+2r}$ on the LHS of \ref{main} is 
\[a_l\frac{l(l-e)}{e^2}\gamma^2\neq0.\]
On the RHS we see that the coefficient of $\tau^{l-2e}=\tau^{l+2r}$ should be $6a_r^2a_l$. (Indeed, $e\nmid i+j+k$ means that $e$ does not divide at least one of them, say $i$. Then $e\nmid i$, $a_i\neq0$ means either $i=l$ or $i>l$. But $i>l$ implies that $j+k<2r$ a contradiction. So $i=l$ and hence $j=k=r$).\\
Hence
\[a_l\frac{l(l-e)}{e^2}\gamma^2=6a_r^2a_l=6\gamma^2a_l\]
and we see that either $l=-2e$ or $l=3e$, contradicting $e\nmid l$ and we are done\\

\par Hence $e=1$, and since $\beta$ was arbitrary, the ramification exponent at every $\beta\in K_1$ is 1. So $z\in K_1(y)$.
\par Finally, letting $K_0$ denote the algebraic closure of $K(y)$ in $\mathcal{U}$, one can show similarly that $z\in K_0(y')$. Since $K_1(y)\cap K_0(y')=K(y,y')$, we have shown that $z\in K(y,y')$.
\par Changing the role of $y$ and $z$, we also have $y\in K(z,z')$ and hence $K(y,y')=K(z,z')$.
\end{proof}

\section{Further comments}

Recall that a strongly minimal set $Y$ is said to be unimodular if for any $n$ and any differential field $K$ over which $Y$ is defined, if $y_1,\ldots,y_n$,$y'_1,\ldots,y'_n\in Y$ are such that $\overline{y}$ and $\overline{y}'$ are interalgebraic over K (i.e $K\gen{\overline{y}}^{alg}=K\gen{\overline{y}'}^{alg}$), and $RM(\overline{y}/K)=RM(\overline{y}'/K)=n$, then $mult(\overline{y}/K\gen{\overline{y}'})=mult(\overline{y}'/K\gen{\overline{y}})$. Here $RM(\overline{y}/K)=n$ means that $y_{1},..,y_{n}$ together with all their derivatives $y_{i}^{(j)}$ are algebraically independent over $K$.\\

It is not hard to check that since correspondences are automorphisms, the solution set $X(\alpha)$ of the second Painlev\'e equation is unimodular. On the other hand, we have seen that the Manin kernels are not unimodular. Hrushovski (\cite{private}) pointed out that for strongly minimal sets, unimodularity is preserved under nonorthogonality (and the proof of Proposition \ref{prop1} is basically a special case). So we could have also deduce in that way that $P_{II}(\alpha)$, $\alpha\not\in 1/2+\mathbb{Z}$, is geometrically trivial. More importantly though, we have that in $DCF_0$ any unimodular strongly minimal set is geometrically trivial. We conjecture that the converse is also true:
\begin{con}
In any differentially closed field, every geometrically trivial strongly minimal set is unimodular.
\end{con}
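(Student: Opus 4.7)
The plan is to mirror the two-step structure of Proposition \ref{prop1}: first reduce the conjecture, via geometric triviality, to a statement about pairs, then attack that pairwise statement directly. For the reduction, suppose $\bar y=(y_1,\dots,y_n)$ and $\bar y'=(y'_1,\dots,y'_n)$ lie in $Y^n$, are interalgebraic over $K$, and both have Morley rank $n$. Each $y'_j$ lies in $acl(K\langle\bar y\rangle)\setminus acl(K)$, so geometric triviality applied to $\{y_1,\dots,y_n,y'_j\}$ yields an index $\sigma(j)$ with $y'_j\in acl(K\langle y_{\sigma(j)}\rangle)$; a rank count forces $\sigma$ to be a bijection. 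Since $y'_j$ is algebraic over the single element $y_{\sigma(j)}$, it is independent from $\{y_i:i\neq\sigma(j)\}$ over $K\langle y_{\sigma(j)}\rangle$, so $tp(y'_j/K\langle\bar y\rangle)=tp(y'_j/K\langle y_{\sigma(j)}\rangle)$ and the realizations of $tp(\bar y'/K\langle\bar y\rangle)$ factor fibrewise. This yields
\[mult(\bar y'/K\langle\bar y\rangle)=\prod_j mult(y'_j/K\langle y_{\sigma(j)}\rangle)\]
together with the symmetric formula in the opposite direction, reducing the whole conjecture to the pairwise claim: for generic interalgebraic $y,z\in Y$ over $K$, $mult(y/K\langle z\rangle)=mult(z/K\langle y\rangle)$.

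For the pairwise statement, the locus of $(y,z)$ over $K$ is an irreducible Kolchin-closed $V\subseteq Y\times Y$ whose two projections are finite-to-one of degrees $mult(z/K\langle y\rangle)$ and $mult(y/K\langle z\rangle)$. Assume for contradiction these differ, say $d_1<d_2$. I would iterate: compose $V$ with itself using generic fibre products to produce correspondences $V^{(k)}\subseteq Y\times Y$ whose projection degrees grow as $d_1^k$ and $d_2^k$, hence diverge. The forward orbits of a generic $y$ under $V^{(\cdot)}$ then give a sequence $y_0,y_1,y_2,\dots$ of generic points, all interalgebraic with $y$ over $K$, on which a nontrivial definable semigroup action becomes visible. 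One would then seek to extract from this semigroup action a Hrushovski group configuration within $Y$, contradicting the assumed geometric triviality.

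The main obstacle is that the conjecture is essentially equivalent to the well-known open problem of whether every geometrically trivial strongly minimal set in $DCF_0$ is $\omega$-categorical (the restatement promised in Section 4). At present there is no general structural description of trivial strongly minimal sets in $DCF_0$ analogous to the Hrushovski-Sokolovic trichotomy for the non-trivial part, and all known concrete verifications — including the Painlev\'e families handled in Proposition \ref{Mainprop} — proceed by \emph{ad hoc} Umemura-style local Puiseux analysis at a singularity of the defining equation, which is tied to the specific algebraic geometry of the equation at hand. Extracting a group configuration from an asymmetric correspondence in the absence of such a structural foothold is the heart of the problem, and I expect a full proof will require either a qualitative classification of trivial strongly minimal differential-algebraic curves or a substantively new model-theoretic input.
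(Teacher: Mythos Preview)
The statement is presented in the paper as a \emph{conjecture}, not a theorem; the paper offers no proof and simply remarks that it is a restatement of the open problem of whether every geometrically trivial strongly minimal set in $DCF_0$ is $\omega$-categorical. Your final paragraph identifies exactly this, so there is no proof in the paper to compare against, and your proposal is appropriately framed as a plan-plus-obstruction rather than a claimed proof.

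On the substance of your sketch: the reduction to pairs via triviality of the pregeometry is the natural first move, though the line $tp(y'_j/K\langle\bar y\rangle)=tp(y'_j/K\langle y_{\sigma(j)}\rangle)$ is not justified by independence alone (an algebraic element is automatically independent from everything over its base, yet its multiplicity can still drop when parameters are added); one should invoke stationarity over $acl(K,y_{\sigma(j)})$ and run the automorphism argument with a little more care to push the factorisation through. The genuine obstruction, as you correctly locate it, is the pairwise step: manufacturing a group configuration from an asymmetric self-correspondence on a trivial strongly minimal set is exactly the unresolved heart of the $\omega$-categoricity conjecture, and the paper makes no claim to have resolved it.
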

This is a restatement of the conjecture that in $DCF_0$ every geometrically trivial strongly minimal set is is infinite dimensional (cf. \cite{Marker2}). 
%----------------------------------------------------------------------------------------------------------------------------------------

\end{document}